\documentclass[12pt,a4]{amsart}
\usepackage{amsmath,amssymb,url}

\theoremstyle{plain}
\newtheorem{thm}{Theorem}

\newtheorem{pro}[thm]{Proposition}

\newtheorem{lem}[thm]{Lemma}
\newtheorem{fact}[thm]{Fact}

\DeclareMathOperator{\End}{{\sf End}} 
\DeclareMathOperator{\im}{{\sf im}}

\newcommand{\lab}[1]{\label{#1}}
\newcommand{\skv}[2]{ \langle #1 \mid  #2 \rangle}

\begin{document}

\title[Semiartinian $*$-regular rings]{Unit-regularity and representability
for semiartinian $*$-regular rings. Erratum}

\subjclass[2000]{Primary:16E50, 16W10.}
\keywords{$*$-regular ring, representable, unit-regular}

\author[C.~Herrmann]{Christian Herrmann}
\address[C.~Herrmann]{TUD FB4\\Schlo{\ss}gartenstr. 7\\64289 Darmstadt\\Germany}\email{herrmann@mathematik.tu-darmstadt.de}

\begin{abstract}
We discuss whether a  semiartinian $*$-regular ring $R$ is
unit-regular; if, in addition, $R$ is subdirectly irreducible then it
admits  a representation within some inner product space.
\end{abstract}

\maketitle

\setcounter{section}{-1}

\section{Erratum} There is no proof of Thm. 7. since
there is no proof of Fact 5.

\section{Introduction}
The motivating examples of $*$-regular rings, due to Murray and
von Neumann, were the $*$-rings
of unbounded operators affiliated with  finite von Neumann algebra factors;
to be subsumed, later, as  $*$-rings of quotients
of finite Rickart $C^*$-algebras. All the latter have been
shown to be $*$-regular and unit-regular (Handelman \cite{hand2}).
Representations of these  as $*$-rings of endomorphisms
of suitable inner product spaces have been
obtained first, in the von Neumann case, by Luca Giudici
(cf. \cite{neu}), in general  in
 joint work with Marina Semenova \cite{awalg}.
The existence of such representations
implies direct finiteness \cite{dirfin}.
In the present  note
we show that every semiartinian $*$-regular ring 
is unit-regular and a subdirect product 
of representables.  This might be a contribution to the 
question, asked by
Handelman (cf. \cite[Problem 48]{good}), 
whether all $*$-regular rings are unit-regular.
We rely heavily on the result of Baccella and Spinosa~\cite{bac}
that a semiartinian regular ring is unit-regular provided
that all its homomorphic images are directly finite.
Also, we rely on the theory of representations 
of $*$-regular rings developed by Florence Micol~\cite{flo}
(cf. \cite{awalg,linrep}). Thanks are due to the referee
for a timely, concise, and helpful report.

\section{Preliminaries: Regular and $*$-regular rings}
We refer to Berberian~\cite{berb} and Goodearl~\cite{good}.
Unless stated otherwise,
rings will be associative,
with  unit $1$ as constant. 
A (von Neumann)  \emph{regular} ring $R$ is  such that for 
each $a \in R$ there is $x \in R$ such that $axa=a$;
equivalently, every right (left) 
principal ideal is generated by an idempotent.

The \emph{socle} $Soc(M)$ of a right $R$-module
is the sum of all minimal submodules.
For a ring $R$ define its  \emph{Loewy series}
of right ideals $L_{\alpha}(R)$ 
by  $L_0(R)=0$. $L_{\alpha+1} =Soc(R/L_{\alpha}(R))$, and
$L_{\alpha}(R)=\bigcup_{\beta<\alpha} L_{\beta}(R)$
is $\alpha$ is a limit ordinal. 
$R$ has \emph{Loewy length} $\alpha$ if
$R=L_{\alpha}(R)$ with $\alpha$ minimal, provided that such exists.
A  ring $R$ with unit is (right)  \emph{semiartinian} 
if $R/M$ has nonzero socle for each right ideal of $R$;
equivalently, $R$ has Loewy length $\alpha$ for some $\alpha$ -
which must be of the form $\xi+1$ since $R$ has unit $1$.
If $R$ is regular, then  the $L_{\alpha}(R)$ are, moreover,  ideals
since left and right socle of a regular ring coincide
\cite{jac}.

A ring $R$ is \emph{directly finite}
if $xy=1$ implies $yx=1$ for all $x,y\in R$. A ring $R$ is \emph{unit-regular}
if for any $a \in R$ there is a unit $u$ of $R$
such that $aua=a$. 
Unit-regular rings are directly finite, in particular.
The crucial fact to be used, here, is the
following result of Baccella and Spinosa \cite{bac}.
\begin{thm}\lab{bac}
A semiartinian  regular ring is unit-regular
provided all its homomorphic images are directly finite.
 \end{thm}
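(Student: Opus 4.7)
The plan is to argue by transfinite induction on the ordinal $\xi$ with $R$ of Loewy length $\xi+1$, relying on the well-known reformulation of unit-regularity for a regular ring as internal cancellation: $R$ is unit-regular if and only if, for all idempotents $e,f\in R$, every isomorphism $eR\cong fR$ of right modules is matched by an isomorphism $(1-e)R\cong (1-f)R$.

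For the base case $\xi=0$, $R$ coincides with its socle and so is a semisimple unital ring. By Wedderburn--Artin it is a finite direct product of matrix rings over division rings, and every such product is classically unit-regular.

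For the inductive step, set $S=Soc(R)$ and $\bar R=R/S$. The quotient $\bar R$ is semiartinian regular of strictly smaller Loewy length, and since every homomorphic image of $\bar R$ is also an image of $R$, it inherits the hypothesis that all images are directly finite. By the inductive hypothesis, $\bar R$ is unit-regular. Given $eR\cong fR$ in $R$, passing to $\bar R$ and applying internal cancellation there yields $(1-\bar e)\bar R\cong(1-\bar f)\bar R$. I would then lift: choose $y\in(1-f)R(1-e)$ whose image in $\bar R$ realises the quotient isomorphism, and adjust $y$ by an element of the semisimple ideal $S$ so that left multiplication by the adjusted element becomes a genuine right-module isomorphism $(1-e)R\to(1-f)R$. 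Cancellation inside $S$ (its homogeneous components being matrix-ring-type direct sums) makes such an adjustment possible at the level of summands in the socle, and direct finiteness of $R$ itself prevents the corrected map from being a mere surjection with a nonzero kernel sitting in $S$.

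The step I expect to be the main obstacle is precisely this lifting. It is not enough to combine unit-regularity of $\bar R$ with the semisimple structure of $S$; the correction must be made coherently with both the quotient isomorphism and the given idempotents in $R$, and direct finiteness must be available at each intermediate quotient appearing in the Loewy series. This is why the hypothesis cannot be weakened from ``all homomorphic images directly finite'' to ``$R$ directly finite'': the induction threads through many quotients, and the finite-cancellation property is required at every stage.
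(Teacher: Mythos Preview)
The paper does not itself prove this statement; it is quoted as the main result of Baccella and Spinosa~\cite{bac} and used as a black box. Turning to your sketch: there is a structural flaw in the induction. You claim that $\bar R=R/Soc(R)$ has strictly smaller Loewy length than $R$, but this fails once the Loewy length is infinite. Writing $(L_\alpha)_\alpha$ for the Loewy series of $R$, the Loewy series of $R/L_1$ is $(L_{1+\alpha}/L_1)_\alpha$; since $1+\xi=\xi$ for every infinite ordinal $\xi$, if $R$ has Loewy length $\xi+1$ with $\xi\geq\omega$ then $R/Soc(R)$ again has Loewy length $\xi+1$. (Concretely: Loewy length $\omega+1$ is preserved under passage to $R/Soc(R)$.) Thus the induction does not descend, and the scheme breaks precisely in the transfinite regime that separates ``semiartinian'' from ``artinian''.

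Even restricting to finite Loewy length, the lifting step is the entire difficulty and is only gestured at. ``Adjust $y$ by an element of $S$'' and ``cancellation inside $S$'' hide the real issue: the socle can have infinitely many homogeneous components, the correction must be made coherently across all of them while remaining compatible with the fixed idempotents $e,f$, and you must rule out that the adjusted map is merely a surjection with nonzero kernel in $S$. This is exactly where the hypothesis on \emph{all} homomorphic images is consumed, and it is the substantive content of~\cite{bac}, whose argument runs through $K_0$ rather than elementary idempotent manipulation. Your final paragraph correctly flags this as the obstacle, but the proposal supplies no mechanism to overcome it.
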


A $*$-\emph{ring} is a ring $R$ endowed with an
involution $r \mapsto r^*$.
Such $R$ is $*$-\emph{regular} if it is regular and  $rr^*=0$ only for $r=0$.
A \emph{projection} is an idempotent $e$ such that $e=e^*$;
we write $e \in P(I)$ if $e \in I$.
A $*$-ring is $*$-regular if and only if 
for any $a \in R$ there is is  a projection $e$ with
$aR=eR$; such $e$ is unique and obtained as $aa^+$ where
$a^+$ is the pseudo-inverse of $a$.
In particular, for  $*$-regular  $R$,
each ideal $I$  is a $*$-ideal, that is,  closed under the involution. 
Thus, $R/I$ is a $*$-ring with involution $a+I \mapsto a^*+I$
and a homomorphic image of the $*$-ring $R$. In particular,
$R/I$ is regular; and $*$-regular since $aa^++I$ is a projection
generating $(a+I)(R/I)$.

If $R$ is a $*$-regular ring
and $e\in P(R)$ then the \emph{corner}
$eRe$ is a $*$-regular ring with unit $e$,
operations inherited from $R$, otherwise.
For a $*$-regular ring, $P(R)$
is a modular lattice, with  partial order given by $e\leq f \Leftrightarrow fe=e$,
which is isomorphic to the lattice $L(R)$
of principal right ideals of $R$ via $e \mapsto eR$.
In particular, $eRe$ is artinian if and only if
$e$ is contained in the  sum of finitely many minimal right ideals.

A $*$-ring is \emph{subdirectly irreducible} 
if it has a unique minimal ideal, denoted by  $M(R)$.
Observe that $Soc(R)\neq 0$ implies $M(R) \subseteq Soc(R)$
since $Soc(R)$ is an ideal. 
For the following see  Lemma 2 and Theorem 3 in \cite{simp}.

\begin{fact}\lab{simp}
If $R$ is a subdirectly irreducible $*$-regular ring then
$eRe$ is simple for all $e \in P(M(R))$
and $R$ a homomorphic image of a $*$-regular
sub-$*$-ring of some ultraproduct of the $eRe$, $e \in P(M(R))$.
\end{fact}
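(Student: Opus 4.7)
The claim splits into two assertions.

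For the first, let $J$ be a nonzero two-sided ideal of $eRe$. Then $RJR$ is a nonzero ideal of $R$, so by subdirect irreducibility it contains $M(R)\ni e$. Writing $e=\sum_i r_i j_i s_i$ with $r_i,s_i\in R$ and $j_i\in J\subseteq eRe$ (so that $ej_ie=j_i$), compression on each side yields
$$
e \,=\, eee \,=\, \sum_i (er_ie)\, j_i\, (es_ie) \,\in\, (eRe)\, J\, (eRe) \,\subseteq\, J,
$$
hence $J=eRe$.

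For the second, I would use an ultraproduct. The poset $P(M(R))$ is upward-directed: since $M(R)$ is an ideal, the join $e\vee f\in P(R)$ of any two projections in $M(R)$ lies again in $P(M(R))$. Fix an ultrafilter $U$ on $P(M(R))$ refining the filter of principal upward sections $\{f:f\geq e\}$, and form the ultraproduct $Q:=\prod_U eRe$, which is $*$-regular by \L o\'{s}'s theorem (in the language of $*$-rings augmented with pseudo-inverse). The compression map $\varphi:R\to Q$, $\varphi(a)=[(eae)_e]_U$, is unital, additive, and $*$-preserving. Its restriction to $M(R)$ is a $*$-ring embedding: each $m\in M(R)$ has left and right support projections $p,q\in P(M(R))$ with $eme=m$ for all $e\geq p\vee q$, a $U$-large condition. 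Moreover, for $a\in R$ and $m\in M(R)$ one verifies $\varphi(a)\varphi(m)=\varphi(am)$ and $\varphi(m)\varphi(a)=\varphi(ma)$, so $\varphi(R)$ acts on $\varphi(M(R))$ through the $R$-bimodule structure. Let $S\subseteq Q$ be the $*$-regular sub-$*$-ring generated by $\varphi(R)$ and let $I\subseteq S$ be the two-sided annihilator of $\varphi(M(R))$ in $S$. Since the annihilator of $M(R)$ in $R$ vanishes (any nonzero left annihilator would give $M(R)^2=0$ via $RaR\supseteq M(R)$, which is impossible in a $*$-regular ring), the assignment $\varphi(a)+I\mapsto a$ extends to a surjective $*$-homomorphism $S/I\to R$.

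The main obstacle is to verify that this map is an isomorphism, i.e.\ that $I$ accounts for the full kernel. The compression $\varphi$ fails to be multiplicative on $R\setminus M(R)$, the defect being $\varphi(a)\varphi(b)-\varphi(ab)=[(ea(e-1)be)_e]_U$; fortunately this defect does annihilate $\varphi(M(R))$ (for $e$ above the left support of $bm$ one has $(e-1)bm=0$), so it is absorbed in $I$. Checking that the pseudo-inverse closure introduces no elements outside the multiplier structure of $\varphi(M(R))$, that $S$ is indeed $*$-regular with $I$ a two-sided ideal, and that $S/I$ reproduces $R$ (and not some proper further quotient), is the technical heart of the argument and forms the content of Lemma~2 and Theorem~3 of \cite{simp}.
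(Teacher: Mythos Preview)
The paper does not actually prove this statement: it is recorded as a \emph{Fact}, with the justification deferred entirely to Lemma~2 and Theorem~3 of \cite{simp}. There is therefore no in-paper argument to compare your proposal against.

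Your argument for the first assertion (simplicity of $eRe$ for $e\in P(M(R))$) is correct and self-contained; the compression trick $e=\sum (er_ie)j_i(es_ie)\in J$ is exactly the standard one.

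For the second assertion, your sketch follows a natural line --- upward-directedness of $P(M(R))$, ultraproduct along a refining ultrafilter, the compression map $a\mapsto[(eae)_e]_U$, and the observation that its multiplicative defect annihilates $\varphi(M(R))$. These ingredients are sound, and your computation that the two-sided annihilator of $M(R)$ in $R$ is zero (via idempotence of ideals in regular rings) is correct. However, the proposal does not close the argument: you explicitly defer the ``technical heart'' back to \cite{simp}, which is precisely what the paper itself does. Two points in particular are not innocuous. First, ``let $S$ be the $*$-regular sub-$*$-ring generated by $\varphi(R)$'' presupposes that such a closure is well-behaved; $*$-regular subrings are not in general an algebraic closure system, so one must specify the pseudo-inverse closure and verify it stays $*$-regular. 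Second, and more seriously, the assignment $\varphi(a)+I\mapsto a$ is defined only on $(\varphi(R)+I)/I$; if the $*$-regular closure $S$ is strictly larger than $\varphi(R)+I$, you have given no mechanism to extend the map to $S/I$ with values in $R$, nor any reason why $S/I$ should not be a proper extension of $R$. That extension step is exactly the content you are citing away.

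In sum: part one is a complete proof; part two is a plausible outline that, like the paper, ultimately rests on \cite{simp}.
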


\section{Preliminaries: Representations}
We refer to Gross~\cite{gross} and Sections 1 of \cite{awalg}, 2--4
of \cite{linrep}. 
By an \emph{inner product space} $V_F$
we will mean a 
right vector space (also denoted by $V_F$) over a division $*$-ring $F$,
endowed with a sesqui-linear form 
$\skv{.}{.}$
 which
is \emph{anisotropic} ($\skv{v}{v}=0$ only for $v=0$)
 and \emph{orthosymmetric}, that is,
$\skv{v}{w}=0$ if and only if $\skv{w}{v}=0$.
Let $\End^*(V_F)$ denote the $*$-ring
consisting of those endomorphisms $\varphi$
of the vector space $V_F$ which have an adjoint 
$\varphi^*$ w.r.t. $\skv{.}{.}$.

A \emph{representation} of a $*$-ring $R$ within
$V_F$ is an embedding of $R$ into $\End^*(V)$.
$R$ is \emph{representable} if such exists. 
The following is well known,
cf. \cite[Chapter IV.12]{jac}
\begin{fact}\lab{art}
Each simple artinian $*$-regular ring is
representable.
\end{fact}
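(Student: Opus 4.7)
The plan is to follow the classical Wedderburn--Jacobson coordinatisation of simple artinian rings with involution. Since $R$ is simple artinian it contains minimal right ideals, and by $*$-regularity each is of the form $eR$ for some (necessarily primitive) projection $e\in P(R)$. By Schur's lemma $D:=eRe$ is then a division ring; since $e^*=e$, the involution of $R$ restricts to $D$, so $D$ is a division $*$-ring.

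Set $V:=Re$. Left multiplication by elements of $R$ commutes with right multiplication by elements of $D$, so $V$ is a right $D$-module on which $R$ acts faithfully by $D$-linear maps; Wedderburn--Artin (or a composition-length count against $R=\bigoplus_i Rf_i$ for a complete system of minimal projections) upgrades this faithful action to an isomorphism $R\cong \End_D(V)$.

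The heart of the argument, and the step that actually uses $*$-regularity rather than bare artinian structure, is to distil an inner product on $V$ out of the involution. For $v,w\in V=Re$ set
\[
\skv{v}{w}:=v^*w,
\]
which lies in $(Re)^*\cdot Re=eR\cdot Re\subseteq eRe=D$. Sesquilinearity is immediate from $(vd)^*=d^*v^*$ together with ring distributivity; orthosymmetry follows from
\[
\skv{w}{v}=w^*v=(v^*w)^*=\skv{v}{w}^*.
\]
Anisotropy is the only subtle point, and is precisely where proper involution bites: if $\skv{v}{v}=v^*v=0$, then applying the axiom $rr^*=0\Rightarrow r=0$ to $r:=v^*$ (using $v^{**}=v$) forces $v=0$.

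Finally, for $r\in R$ and $v,w\in V$ the computation
\[
\skv{rv}{w}=(rv)^*w=v^*r^*w=\skv{v}{r^*w}
\]
identifies the ring involution on $R$ with the adjoint operation relative to $\skv{.}{.}$; hence the isomorphism $R\cong\End_D(V)$ factors through $\End^*(V_D)$, giving the required representation. The one potentially delicate point is anisotropy, and it collapses in the single line above, so the proof is essentially bookkeeping once the correct form has been guessed.
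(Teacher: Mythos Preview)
Your argument is correct and is precisely the classical Wedderburn--Jacobson construction the paper invokes by citing \cite[Chapter IV.12]{jac}; the paper supplies no proof of its own beyond that reference. The choice $V=Re$, $D=eRe$, form $\skv{v}{w}=v^*w$, and the observation that $*$-regularity forces anisotropy are exactly the ingredients of the cited treatment.
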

The following two facts
are consequences of
  Propositions 13 and 25 in \cite{awalg} 
(cf. 
Micol
\cite[Corollary 3.9]{flo}) and, 
respectively,
 \cite[Theorem 3.1]{dirfin} 
(cf. \cite[Theorem 4]{simp}). 
\begin{fact}\lab{F2}
A $*$-regular ring is 
representable provided it is a homomorphic
image of a $*$-regular sub-$*$-ring of 
an ultraproduct of representable $*$-regular rings.
\end{fact}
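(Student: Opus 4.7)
The plan is to show the class of representable $*$-regular rings is closed under each of sub-$*$-rings, ultraproducts, and homomorphic images separately; composing the three closures then yields the statement.

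Sub-$*$-rings are trivial: restrict the given embedding into $\End^*(V_F)$. For ultraproducts, given each $T_i \hookrightarrow \End^*(V_i)$ over a division $*$-ring $F_i$ with form $\skv{\cdot}{\cdot}_i$, I would form the ultraproduct division $*$-ring $F := \prod_\mathcal{U} F_i$, the ultraproduct right $F$-vector space $V := \prod_\mathcal{U} V_i$, and the component-wise sesquilinear form $\skv{[v_i]}{[w_i]} := [\skv{v_i}{w_i}_i]$. Anisotropy, orthosymmetry, and the existence of adjoints are all first-order expressible in a suitable two-sorted language for inner product spaces together with their endomorphism rings, so the transfer theorem for ultraproducts delivers these properties on $V$, and the diagonal map gives an injective $*$-homomorphism $\prod_\mathcal{U} T_i \to \End^*(V_F)$.

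The homomorphic image step is the main obstacle. Given a faithful representation $\phi \colon R \to \End^*(V_F)$ and an ideal $I$ of $R$, the natural candidate is the invariant subspace $W := \{v \in V : \phi(a) v = 0 \text{ for all } a \in I\}$. One checks that the form restricts to an anisotropic orthosymmetric form on $W$, that each $\phi(r)$ restricts to an operator on $W$ with adjoint $\phi(r^*)|_W$, and that the resulting action of $R$ on $W$ factors through $R/I$. The real difficulty is that $W$ may be too small to distinguish cosets of $I$; for instance $W = 0$ whenever $V$ is a faithful simple $R$-module and $I \neq 0$. To force faithfulness I would produce, for each $r \in R \setminus I$, a separate auxiliary representation of $R$ in which $r$ is not absorbed into the image of $I$---typically by passing to an ultrapower of $V$ and selecting an invariant subspace corresponding to a primitive quotient of $R/I$ that avoids $r$---and then take an orthogonal sum of these auxiliary representations, which is faithful for $R/I$. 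The delicate technical points are to coordinate all the auxiliary inner product spaces over a common division $*$-ring, possibly via a further ultrapower, and to verify that orthogonal sums preserve the existence of adjoints for every element of $R/I$.
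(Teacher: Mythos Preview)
The paper does not prove this statement; it is recorded as a Fact drawn from the literature (Propositions~13 and~25 of \cite{awalg}, cf.\ Micol \cite[Corollary~3.9]{flo}), so there is no in-paper argument to compare against directly.

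Your handling of sub-$*$-rings and of ultraproducts is correct and in fact matches what the paper itself uses elsewhere: in the proof of Lemma~\ref{large} the same multi-sorted first-order setup is invoked to conclude that representations are closed under ultraproducts, with a reference to \cite[Proposition~13]{awalg}.

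The homomorphic-image step is where your proposal has a genuine gap. You are right that the annihilator subspace $W$ can be zero, and right that this is the crux. But the remedy you sketch---pass to an ultrapower of $V$ and ``select an invariant subspace corresponding to a primitive quotient of $R/I$ that avoids $r$''---is a hope rather than a construction. For a saturation argument of this shape to produce even a single vector $v$ with $Iv=0$ and $rv\neq 0$, one would need the type $\{\,av=0:a\in I\,\}\cup\{rv\neq 0\}$ to be finitely satisfiable in $V$; that is, for every finite $I_0\subseteq I$ there must already exist $v\in V$ annihilated by $I_0$ but not by $r$. Nothing you write supplies this, and it does not follow from $r\notin I$ by general module theory over an arbitrary faithful module. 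This is precisely the place where the $*$-regular hypothesis on $R$ (projections, pseudo-inverses, the lattice $L(R)$) has to do real work, and your outline never invokes it for the $H$-step. Until that input is secured, the ultrapower is idle, and the downstream issues you label ``delicate'' (a common division $*$-ring, adjoints under orthogonal sums) never arise. The arguments in \cite{flo,awalg,linrep} do not go through invariant subspaces of the given $V$ in this way; closure under homomorphic images is obtained there by constructions specific to the $*$-regular setting.
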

\begin{fact}\lab{fr}
Every representable $*$-regular ring is directly finite.
\end{fact}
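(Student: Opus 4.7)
Let $R$ be a representable $*$-regular ring, fix a $*$-embedding $R\hookrightarrow\End^*(V_F)$, and suppose $\varphi,\psi\in R$ satisfy $\varphi\psi=1$. The goal is to deduce $\psi\varphi=1$, equivalently that the idempotent $e:=1-\psi\varphi\in R$ vanishes. A direct calculation yields $e^2=e$, $\varphi e=0$, $e\psi=0$, and the identity $eR=\{x\in R:\varphi x=0\}$: one inclusion is immediate, and conversely $\varphi x=0$ forces $x=ex+\psi\varphi x=ex$. Since $1\in\varphi R$ gives $\varphi R=R$, the pseudo-inverse $\varphi^+\in R$ satisfies $\varphi\varphi^+=1$, and the projection of $R$ generating $eR$ is $p:=1-\varphi^+\varphi$; the problem thus reduces to showing $p=0$.

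I would then carry the situation into $V_F$. From $pR=eR$ the ranges agree, $\im p=\im e=\ker\varphi$ as subspaces of $V$; and $p=p^*$ being a self-adjoint idempotent of $\End^*(V_F)$ makes $p$ an orthogonal projection, so $V=\ker\varphi\oplus(\ker\varphi)^\perp$ and $\im\varphi^+=\im(1-p)=(\ker\varphi)^\perp$. Suppose for contradiction that some $v_0\in\ker\varphi$ is nonzero; anisotropy gives $\skv{v_0}{v_0}\neq 0$. The inclusion $\im\varphi^+\subseteq(\ker\varphi)^\perp$ yields $\skv{\varphi^+ w}{v_0}=0$ for every $w\in V$, whence $(\varphi^+)^*v_0=0$; dually, $\varphi^*(V)\subseteq(\ker\varphi)^\perp$ gives that $\varphi^*v_0$ is orthogonal to all of $\ker\varphi$. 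The plan is to combine these orthogonality relations with $v_0=\varphi\psi v_0$ and the identities $(\varphi^+)^*\varphi^*=1$, $\psi^*\varphi^*=1$, and $\varphi^+=(1-p)\psi$ to force $\skv{v_0}{v_0}=0$, a contradiction.

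The main obstacle is precisely this closing step. The full ring $\End^*(V_F)$ is not in general directly finite: the classical one-sided shift and its adjoint on finitely supported sequences provide $\varphi,\psi\in\End^*(V)$ with $\varphi\psi=1$ and $\psi\varphi\neq 1$. Hence the contradiction must genuinely exploit that both $\varphi^+$ and the orthogonal projection $p$ lie in the $*$-regular subring $R$, and not merely appeal to geometry inside $V$. If the direct orthogonality calculation does not close in full generality, an alternative route is to reduce to the subdirectly irreducible case via Facts~\ref{simp},~\ref{art}, and~\ref{F2}, and invoke the direct finiteness of the simple artinian corners.
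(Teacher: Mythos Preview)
Your proposal is not a proof: you explicitly stop short of the closing step, and the two routes you sketch both fail.

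For the direct route, the orthogonality relations you extract---$(\varphi^+)^*v_0=0$, $\varphi^*v_0\perp\ker\varphi$, the decomposition $V=\ker\varphi\oplus(\ker\varphi)^\perp$---are all consequences of $p=p^*=p^2$ and $\varphi\varphi^+=1$ alone. They hold verbatim for the shift/backshift pair on finitely supported sequences (where one may take $\varphi^+=\psi$ and $p$ the projection onto the first coordinate), so no contradiction can be squeezed out of them. The identity $v_0=\varphi\psi v_0$ adds nothing: it merely restates $\varphi\psi=1$. The ``plan'' you outline therefore cannot close, for exactly the reason you yourself diagnose: nothing in it uses more about $R$ than that it contains $\varphi,\psi,\varphi^+,p$, and $\End^*(V_F)$ already contains such elements without being directly finite. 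What is missing is a genuine use of the $*$-regularity of $R$ as a ring, not just of the individual elements.

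The alternative route via Facts~\ref{simp}, \ref{art}, \ref{F2} is circular. Those facts transfer \emph{representability}, not direct finiteness; to get anywhere you would need the corners $eRe$ of a subdirectly irreducible quotient to be artinian, but that is exactly the conclusion of Theorem~\ref{thm}, which requires $Soc\neq 0$ and is not implied by representability. Worse, Theorem~\ref{thm} is then used together with Fact~\ref{fr} to obtain direct finiteness in the main theorem---so invoking that chain to prove Fact~\ref{fr} would be a loop.

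The paper does not prove Fact~\ref{fr}; it is quoted from \cite{dirfin}. The argument there is of a different nature and does not proceed by an elementwise orthogonality calculation inside $V_F$.
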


\section{Main results}

\begin{thm}\lab{thm}
If $R$ is a subdirectly irreducible $*$-regular ring
such that $Soc(R)\neq 0$, then $Soc(R)=M(R)$,
each $eRe$ with $e \in P(M(R))$ is artinian, and $R$ is representable.
\end{thm}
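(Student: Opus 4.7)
My plan is to first establish $Soc(R)=M(R)$, then show each corner $eRe$ with $e\in P(M(R))$ is artinian, and finally combine Facts~\ref{simp}, \ref{art}, \ref{F2} to obtain representability. The main obstacle is the first step, where subdirect irreducibility must be leveraged to force $Soc(R)$ to lie in a single isotype.

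For Step~1, the socle of any ring is a two-sided ideal, since left multiplication sends a minimal right ideal either to $0$ or isomorphically to another minimal right ideal. So $Soc(R)\supseteq M(R)$ by subdirect irreducibility. Decomposing $Soc(R)=\bigoplus_{\alpha}S_{\alpha}$ into isotypic components --- each itself a two-sided ideal --- the summands meet trivially yet every nonzero one contains $M(R)\neq 0$, so exactly one $S_{\alpha}$ is nonzero and all minimal right ideals of $R$ are mutually isomorphic. Fix a primitive projection $e$ with $eR$ minimal; for any primitive projection $f$ with $fR\cong eR$, the isomorphism provides $y\in fRe$, $z\in eRf$ with $yz=f$, whence $f\in ReR$. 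Summing over primitive projections yields $Soc(R)=ReR$. To verify minimality, take a nonzero two-sided ideal $J\subseteq Soc(R)$ and $0\neq a\in J$; then $aR$ is cyclic and semisimple, hence of finite length, so it contains some minimal right ideal $fR$. Thus $f\in J$ and $ReR=RfR\subseteq J\subseteq Soc(R)=ReR$, forcing $J=Soc(R)$. Hence $Soc(R)=M(R)$.

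For Steps~2 and 3, note that if $e\in P(M(R))=P(Soc(R))$ then $e$ lies in a finite sum of minimal right ideals, so $eRe$ is artinian by the criterion recalled at the end of Section~2. Combined with Fact~\ref{simp}, each such $eRe$ is simple artinian $*$-regular, hence representable by Fact~\ref{art}; the second clause of Fact~\ref{simp} exhibits $R$ as a homomorphic image of a $*$-regular sub-$*$-ring of an ultraproduct of representable $*$-regular rings, so Fact~\ref{F2} yields representability of $R$.
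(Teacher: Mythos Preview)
Your proof is correct, and the overall architecture---artinianness of corners plus Facts~\ref{simp}, \ref{art}, \ref{F2}---matches the paper. The real difference lies in how you establish $Soc(R)=M(R)$ and, as a consequence, how you reach artinianness of $eRe$.

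The paper does \emph{not} prove $Soc(R)=M(R)$ first. Instead it takes any minimal right ideal $aR$, observes that $M(R)$ is contained in the two-sided ideal generated by $a$, writes any $e\in P(M(R))$ as $\sum_i r_ias_i$, and checks that each $r_ias_iR$ is again minimal; this puts $e$ in a finite sum of minimal right ideals directly, giving $eRe$ artinian and hence representability. Only afterwards is $Soc(R)\subseteq M(R)$ shown, and by a lattice-theoretic argument: via the isomorphism (from \cite{fred}) between the ideal lattice of $R$ and the congruence lattice of $L(R)$, subdirect irreducibility of $R$ forces subdirect irreducibility of $L(R)$, and then any two prime quotients of $L(R)$ are projective in the modular-lattice sense, so every minimal $aR$ lies in the ideal generated by a fixed minimal $eR\subseteq M(R)$.

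Your route is purely module-theoretic: the isotypic decomposition of $Soc(R)$ into two-sided ideals, together with subdirect irreducibility, pins $Soc(R)$ down to a single isotype, and then a short computation identifies $Soc(R)$ with the two-sided ideal $ReR$ generated by one primitive projection and shows it is minimal. This is more elementary---it avoids the congruence-lattice machinery and the reference to \cite{fred}---and it also makes Step~2 immediate, since $e\in Soc(R)$ gives $eR$ cyclic semisimple and hence of finite length. The paper's approach, on the other hand, ties the result visibly to the lattice $L(R)$ and to projectivity of prime quotients, which fits the broader lattice-theoretic viewpoint of the paper.
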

\begin{proof} 
Consider a minimal right ideal $aR$.
As $R$ is subdirectly irreducible,
$M(R)$ is contained in
 the ideal generated by 
$a$;
that is, for any $0\neq e \in P(M(R))$ one has 
 $e=\sum_i r_ias_i$
 for suitable $r_i,s_i \in R$,
 $r_ias_i  \neq 0$. By minimality of $aR$,
one has $as_iR=aR$ and $r_ias_iR= r_iaR$
is minimal, too. Indeed, $x \mapsto r_ix$ is an $R$-linear
map of $aR$ onto $r_iaR\neq 0$.
 Thus, $e \in\sum_i r_iaR$
 means  that $eRe$ is artinian.
By Facts~\ref{art}, \ref{simp}, and \ref{F2}, $R$ is representable.

It remains to show that $Soc(R)\subseteq M(R)$.
Recall that the congruence lattice of $L(R)$
is isomorphic to the ideal lattice of $R$ (\cite[Theorem 4.3]{fred}
with an isomorphism $\theta \mapsto I$  such that $aR/0 \in \theta$
if and only if $a \in I$.
In particular, since $R$ is subdirectly 
irreducible so is $L(R)$.
Choose $e\in M(R)$ with $eR$ minimal.
Then for each minimal $aR$ one has $eR/0$ in the lattice
congruence $\theta$ generated by $aR/0$.
Since both quotients are prime, by modularity this
means that they are projective to each other.
Thus, $aR/0$ is in the lattice congruence
generated by $eR/0$ whence  $a$ is in the ideal generated by $e$,
that is, in $M(R)$.
\end{proof}

\begin{thm}
Every semiartinian $*$-regular ring $R$ is unit-regular
and a subdirect product of representable homomorphic
images. 
\end{thm}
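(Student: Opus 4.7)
The plan is to route both assertions through one observation: every subdirectly irreducible homomorphic image of $R$ is representable and therefore, by Fact~\ref{fr}, directly finite. Given this, the subdirect-product conclusion falls out of Birkhoff's theorem, and unit-regularity follows from Theorem~\ref{bac}. As a preliminary, I would dispatch the fact that the class of semiartinian $*$-regular rings is closed under homomorphic images: in a $*$-regular ring every ideal is a $*$-ideal, so each quotient $R/I$ inherits the involution and is again $*$-regular, while the semiartinian property, defined by a universal quantifier over homomorphic images, descends to $R/I$ for free.

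By Birkhoff's subdirect representation theorem, $R$ embeds as a subdirect product of its subdirectly irreducible homomorphic images $R/J_\lambda$. Each $R/J_\lambda$ is a subdirectly irreducible semiartinian $*$-regular ring, so $Soc(R/J_\lambda)\neq 0$, and Theorem~\ref{thm} produces a representation of $R/J_\lambda$. This already settles the second assertion.

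For unit-regularity I would appeal to Theorem~\ref{bac}, which reduces matters to showing that every homomorphic image of $R$ is directly finite. Such an image is again semiartinian $*$-regular, hence by the previous paragraph a subdirect product of representables; each factor is directly finite by Fact~\ref{fr}, and direct finiteness transfers trivially to subdirect products, since $xy=1$ forces $yx=1$ in each coordinate and hence globally. Thus Theorem~\ref{bac} delivers unit-regularity of $R$.

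The substantive work has all been absorbed into Theorem~\ref{thm}; the would-be obstacle --- manufacturing a representation of a subdirectly irreducible piece from the sole hypothesis of a nonzero socle --- has already been handled there. What remains is just the routine bookkeeping above, together with the two easy closure properties (semiartinian $*$-regular under quotients, directly finite under subdirect products) that make the reduction go through.
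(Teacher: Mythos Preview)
Your proposal is correct and follows essentially the same route as the paper: Birkhoff's decomposition into subdirectly irreducible quotients, Theorem~\ref{thm} to make each such quotient representable, Fact~\ref{fr} for direct finiteness, passage of direct finiteness through subdirect products, and finally Theorem~\ref{bac}. The paper streamlines the presentation by treating an arbitrary ideal $I$ at once (writing $I=\bigcap I_x$ with completely meet irreducible $I_x$), whereas you first do $I=0$ and then invoke closure of the class under quotients, but the content is the same.
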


\begin{proof}
Consider an ideal $I$ of $R$. Then $I=\bigcap_{x \in X}I_x$
with completely meet irreducible $I_x$, that is,
subdirectly irreducible $R/I_x$. Since $R$ is semiartinian one
has $Soc(R/I_x)\neq 0$, whence $R/I_x$ is
representable by Theorem~\ref{thm} and directly finite
by Fact~\ref{fr}. Then $R/I$ is
directly finite, too, being  a subdirect product
of the $R/I_x$. By Theorem~\ref{bac}
it follows that $R$ is unit-regular.
\end{proof}

\section{Examples}

It appears that semiartinian
$*$-regular rings form a 
very special subclass  of the class
of unit-regular $*$-regular rings, even within the class of those which are
subdirect products of representables.
E.g. the $*$-ring of unbounded operators affiliated to the 
hyperfinite von
Neumann algebra factor is  representable,  unit-regular, and
  $*$-regular with zero socle.
On the other hand, due to the following,
 for every simple artinian $*$-regular ring $R$
and any natural number $n>0$ there is a semiartinian
$*$-regular ring having ideal lattice an $n$-element chain
and $R$ as a homomorphic image.

\begin{pro}\lab{pro}
Every representable $*$-regular ring $R$ embeds into some
subdirectly irreducible representable $*$-regular ring 
$\hat{R}$  such that $R \cong \hat{R}/M(\hat{R})$.
In particular,  $\hat{R}$ is semiartinian if
and only if so is $R$. 
\end{pro}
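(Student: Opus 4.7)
The plan is to realise $\hat R$ as a sub-$*$-ring of $\End^*(\tilde V \oplus \tilde V)$ for a suitably chosen inner product space $\tilde V_F$, in which the ``second copy of $R$'' is perturbed by the simple ideal of finite-rank adjointable operators on $\tilde V$; this ideal will play the role of $M(\hat R)$.

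First I would fix an embedding $R \hookrightarrow \End^*(V_F)$ witnessing representability and replace $V$ by $\tilde V := V^{(I)}$ for some infinite index set $I$, with $R$ acting diagonally. Since the representation of $R$ on $V$ is faithful, every nonzero $r \in R$ acts on $\tilde V$ with infinite rank; writing $K$ for the socle of $\End^*(\tilde V_F)$, which is the simple two-sided $*$-ideal of finite-rank adjointable operators, this yields $R \cap K = 0$.

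Next I would define
\[
\hat R := \{(r, r + k) \in \End^*(\tilde V) \times \End^*(\tilde V) : r \in R,\; k \in K\},
\]
embed $R$ into $\hat R$ by $r \mapsto (r, r)$, and view $\hat R$ as a sub-$*$-ring of the block-diagonal part of $\End^*(\tilde V \oplus \tilde V)$. Closure under the ring and $*$-operations follows because $K$ is a $*$-ideal of $\End^*(\tilde V)$, and positivity is inherited. The nontrivial step is verifying $*$-regularity: given $a = (r, u) \in \hat R$ with $u = r + k$, I would pick $x_1 \in R$ with $r x_1 r = r$, note that $v := u - u x_1 u = u(1 - x_1 u) = (1 - u x_1) u$ lies in $K$ and satisfies $u u^+ v = v = v u^+ u$, and set $x_3 := u^+ v u^+$, where $u^+$ is the pseudo-inverse of $u$ in $\End^*(\tilde V)$. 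Then $u x_3 u = v$, so $(x_1, x_1 + x_3) \in \hat R$ is a quasi-inverse of $a$ inside $\hat R$; hence $\hat R$ is a representable $*$-regular ring containing $R$.

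Finally, the surjection $\hat R \to R$, $(r, r + k) \mapsto r$, has kernel $N := \{(0, k) : k \in K\}$, so $\hat R/N \cong R$ and the composition with $r \mapsto (r, r)$ is the identity on $R$. To see $N = M(\hat R)$, take any nonzero ideal $I$ of $\hat R$ and a nonzero $(r, u) \in I$; the condition $R \cap K = 0$ forces $u \neq 0$, so some rank-one projection $e \in K$ satisfies $u e \neq 0$, whence $(r, u)(0, e) = (0, u e) \in I \cap N \setminus \{0\}$, and simplicity of $K$ upgrades this to $N \subseteq I$. Since each rank-one projection $e \in K$ also yields a minimal right ideal $(0, e)\hat R = 0 \times eK$ of $\hat R$, we have $N \subseteq Soc(\hat R)$, and because the ideals of $\hat R$ properly containing $N$ correspond bijectively to those of $R$, the equivalence of semiartinianness follows. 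The main obstacle I expect is the explicit verification of $*$-regularity; the pseudo-inverse formula $x_3 = u^+ v u^+$ is the only non-routine computation, and its correctness crucially depends on choosing $x_1$ inside $R$ so that the defect $v$ lands in the ideal $K$.
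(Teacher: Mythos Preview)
Your construction is essentially the paper's, presented with an extra (redundant) coordinate: since $R\cap K=0$, the second projection $(r,r+k)\mapsto r+k$ is an isomorphism of $*$-rings from your $\hat R$ onto $R+K\subseteq\End^*(\tilde V)$, which is exactly the ring the paper builds. Your passage to $\tilde V=V^{(I)}$ with the diagonal action is a pleasant simplification of the paper's route: the paper obtains $R\cap K=0$ by first manufacturing a \emph{large} representation via an ultrapower (Lemma~\ref{large}), whereas your infinite orthogonal direct sum achieves the same effect by elementary means (and in fact also yields a large representation, since $\im a=(\im_V a)^{(I)}$ for every $a\in R$). The verification that $N=0\times K$ is the unique minimal ideal, and the semiartinian equivalence, are correct.

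The genuine gap is in the $*$-regularity step, precisely where you anticipated trouble. You write ``where $u^+$ is the pseudo-inverse of $u$ in $\End^*(\tilde V)$'', but $\End^*(\tilde V)$ is \emph{not} $*$-regular for infinite-dimensional $\tilde V$: there exist adjointable endomorphisms whose image is not orthogonally complemented, hence with no quasi-inverse. There is no a priori reason why $u=r+k$ should admit a pseudo-inverse in $\End^*(\tilde V)$; showing that it does is exactly equivalent to the $*$-regularity of $R+K$, which is what you are trying to prove, so the argument as written is circular. This is precisely the nontrivial content the paper outsources to Micol \cite[Proposition~3.12]{flo} (cf.\ \cite[Propositions~4.4, 4.5]{linrep}). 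Your idea can be rescued without $u^+$: since $v\in K$ has finite rank with $\im v\subseteq\im u$ and $\ker u\subseteq\ker v$, one can build by hand finite-rank $\sigma,\tau\in K$ with $u\sigma v=v$ and $v\tau u=v$, and then $x_3:=\sigma v\tau\in K$ satisfies $u x_3 u=v$. But this amounts to reproving Micol's proposition in the special case at hand, not to applying the single formula $u^+vu^+$.
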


The proof needs some preparation.
Call a representation $\iota:R \to \End^*(V_F)$ \emph{large}
if for all $a,b \in R$ with  $\im \iota(b) \subseteq \im\iota(a)$ and
 finite  $\dim (\im \iota(a)/\im \iota(b))_F$
one has $\im \iota(a)= \im\iota(b) $.
\begin{lem}\lab{large}
Any representable $*$-regular ring admits some large
representation. 
\end{lem}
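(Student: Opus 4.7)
The plan is to take an arbitrary representation $\iota_0 \colon R \hookrightarrow \End^*(V_{0,F})$ of $R$ (which exists by representability) and enlarge it by forming an infinite orthogonal sum of copies of $V_0$, so that every non-zero finite codimension in the original representation becomes infinite-dimensional in the enlarged one.

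Concretely, fix an infinite index set $I$ and consider $V = \bigoplus_{i \in I} V_0$ (finite-support direct sum), endowed with a sesquilinear form of weighted-sum type (see below), and with the diagonal action $\iota(r)((v_i)_i) = (\iota_0(r) v_i)_i$. Once $V$ is confirmed to be an inner product space and $\iota$ a $*$-ring embedding into $\End^*(V)$, largeness is a direct dimension count: $\im \iota(a) = \bigoplus_i \im \iota_0(a)$, so $\im \iota(b) \subseteq \im \iota(a)$ if and only if $\im \iota_0(b) \subseteq \im \iota_0(a)$, and the quotient $\bigoplus_i (\im \iota_0(a)/\im \iota_0(b))$ has $F$-dimension $|I| \cdot \dim_F(\im \iota_0(a)/\im \iota_0(b))$, which is either zero or infinite. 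Hence any finite-dimensional quotient must be zero, giving equality of images, as required.

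The main obstacle is ensuring that the sum form on $V$ is anisotropic: for a general $*$-field $F$ a finite sum $\sum_i \skv{v_i}{v_i}_{V_0}$ may well vanish with individual summands nonzero, so the naive sum form can acquire isotropic vectors. I would resolve this by passing to a $*$-field extension $F \subseteq F'$ containing central symmetric elements $(c_i)_{i \in I}$ algebraically independent over $F$, extending scalars of $V_0$ accordingly, and using the weighted form $\skv{(v_i)_i}{(w_i)_i} = \sum_i c_i \skv{v_i}{w_i}_{V_0}$. Algebraic independence of the $c_i$ then forces each $\skv{v_i}{v_i}_{V_0} = 0$ in any isotropic vector, whence $v_i = 0$ by anisotropy of $V_0$; orthosymmetry transfers directly from that of $V_0$ together with centrality and symmetry of the $c_i$. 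The extension of scalars and the diagonal action are routine to check, so anisotropy is the only real technical step.
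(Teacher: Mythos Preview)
Your direct-sum-with-weights approach is genuinely different from the paper's and can be made to work, but the anisotropy argument as written has a gap. After extending scalars to $F'$, the vectors $v_i$ live in $V_0\otimes_F F'$ and the inner products $\skv{v_i}{v_i}$ lie in $F'$, not in $F$. Hence the relation $\sum_i c_i\,\skv{v_i}{v_i}=0$ is an identity in $F'$ in which the ``coefficients'' of the $c_i$ may themselves involve all of the $c_j$; algebraic independence of the $c_i$ over $F$ does not by itself force the individual terms to vanish. Likewise, you invoke anisotropy of $V_0$, whereas what is needed is anisotropy of $V_0\otimes_F F'$ over $F'$, which is not automatic. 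Both points can be repaired: a leading-term argument in one central symmetric transcendental shows that anisotropy survives purely transcendental $*$-extension, and then an induction on the size of the support (expand everything in the last variable $c_n$; the constant coefficient yields the inductive hypothesis for $i<n$, and the linear coefficient then forces $\skv{v_{n,0}}{v_{n,0}}=0$) gives anisotropy of the weighted orthogonal sum. So your construction is correct, but it needs substantially more than the one-line justification you supply.

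The paper takes a different route: it forms an ultrapower of the given representation. Anisotropy and orthosymmetry are first-order, hence preserved; choosing the ultrafilter so that the ultrapower field $F'$ has infinite dimension over $F$ and then restricting scalars to $F$, every proper inclusion of $F'$-subspaces automatically has infinite $F$-codimension, which is exactly largeness. The trade-off is that your approach is explicit and avoids ultrafilters, at the price of the inductive anisotropy argument sketched above; the paper's approach is shorter and reuses the ultraproduct machinery already set up for Fact~\ref{F2}.
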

\begin{proof}
Inner product spaces can be considered as 
$2$-sorted structures $V_F$ with sorts $V$ and $F$.
In particular, the class of inner product spaces is
closed under formation of ultraproducts. 
Representations of 
$*$-rings $R$ can be viewed as
$R$-$F$-bimodules $_RV_F$, that is as
 $3$-sorted structures, with $R$ acting faithfully on $V$.
It is easily verified 
that the class of representations of $*$-rings is
 closed under ultraproducts
cf. \cite[Proposition 13]{awalg}.

Now, given a representation $\eta$ of $R$ in $W_F$, 
form an ultrapower $\iota$, that is $_SV_{F'}$,
such that $\dim F'_F$ is infinite
(recall that $F'$ is an ultrapower  of $F$).
Observe that $\End^*(V_{F'})$ is a sub-$*$-ring of 
$\End^*(V_F)$ and 
$\dim (U/W)_F$ is infinite for any subspaces $U\supseteq W$ of
$V_{F'}$.   
Also, $S$ is an ultrapower of $R$ with
canonical embedding $\varepsilon:R \to S$.
Thus, $\varepsilon \circ  \iota$ is a large
representation of $R$ in $V_F$.
\end{proof}

\begin{proof} of Proposition~\ref{pro}.
In view of Lemma~\ref{large} we may assume a large
representation $\iota$ of $R$ in $V_F$.
Identifying $R$ via $\iota$ with its image,
we have $R$ a $*$-regular sub-$*$-ring of $\End^*(V_F)$.
Let $I$ denote the set of all $\varphi \in \End(V_F)$
such that $ \dim (\im \varphi)_F$ is finite. 
According to Micol \cite[Proposition 3.12]{flo}
(cf. Propositions 4.4 (i),(iii) and 4.5 in \cite{linrep}) 
$R+I$ is a $*$-regular   sub-$*$-ring of $\End^*(V_F)$, 
 with unique minimal ideal $I$.
By Theorem~\ref{thm} one has $I=Soc(R+I)$.
Moreover, $R\cap I=\{0\}$
since the representation $\iota$ of $R$ in $V_F$ is large.
Hence, $R\cong (R+I)/I$.
\end{proof}


\begin{thebibliography}{}


\bibitem{bac}
Baccella, G.,  Spinosa, L.:
$K_0$
\emph{of semiartinian von Neumann regular rings. Direct finiteness versus
unit-regularity}.  
Algebr. Represent. Theory
20 (2017), 1189--1213




\bibitem{berb} {S.\,K. Berberian}, \emph{Baer *-rings},    Springer,
Grundlehren 195, Berlin 1972.


\bibitem{good}
{K.\,R. Goodearl},
 \emph{Von Neumann Regular Rings}, second edition, Krieger, 
Malabar, 1991.







\bibitem{gross}
H. Gross,
Quadratic Forms  in  Infinite  Dimensional Vector spaces,
Birkhäuser,
Basel,
1979.










\bibitem{hand2} {D. Handelman}, 
\emph{Finite Rickart $C\sp{*} $-algebras and their properties}, in 
 \emph{Studies in analysis}, ed. G.-C. Rota,  pp.  171--196,
Adv. in Math. Suppl. Stud., 4, Academic Press, New York-London, 1979.


\bibitem{neu}
Herrmann, C.:
On the equational theory of projection lattices of finite
von Neumann factors.
The Journal of Symbolic Logic,
 Vol. 75, No. 3 (2010), 1102--1110

\bibitem{dirfin}
Herrmann, C.,
Direct finiteness of representable regular star-rings. 
Algebra Universalis 80 (2019), no. 1, Art. 3, 5 pp. 





\bibitem{simp}
Herrmann, C., Varieties of star-regular rings.





\bibitem{awalg} 
Herrmann, C., Semenova, M. V.,
Rings of quotients of finite AW star-algebras. Representation and algebraic approximation. (Russian)
Algebra Logika 53 (2014), no. 4, 466--504, 550--551; translation in
Algebra Logic 53 (2014), no. 4, 298--322 

\bibitem{linrep}
Herrmann, C., Semenova, M.,
Linear representations of regular rings and complemented modular lattices with involution. 
Acta Sci. Math. (Szeged) 82 (2016), no. 3--4, 395--442. 


\bibitem{jac} 
N. Jacobson,
Structure of Rings, AMS Col. Publ. XXXVII, Amer. Math. Soc., Providence, RI, 1956




\bibitem{flo}
{F. Micol},
\emph{On representability of star-regular rings and modular ortholattices},
Ph.D. thesis, TU Darmstadt, January 2003,  




\bibitem{fred}
{F. Wehrung}, 
 A uniform refinement property for congruence lattices, Proc.
Amer. Math. Soc. \textbf{127} (1999), 363--370.


\end{thebibliography}
\end{document}